\newtheorem{thm}{Theorem}[section]
\newtheorem{prop}[thm]{Proposition}
\newtheorem{rmk}[thm]{Remark}
\newtheorem{example}[thm]{Example}
\newtheorem{ques}[thm]{Question}
\newcommand{\E}{\mathcal{E}}
\newcommand{\N}{\mathbb{N}}
\newcommand{\T}{\mathcal{T}}
\newcommand{\bd}{\partial}
\newcommand{\Z}{\mathbb{Z}}
\newcommand{\C}{\mathbb{C}}
\newcommand{\tet}{I_{\Delta}}
\newcommand{\qsin}{\mathcal{S}_q}
\newcommand{\qcos}{\mathcal{C}_q}
\author{Daniele Celoria}
\title{A note on the tetrahedral index and the Hahn-Exton $q$-Bessel function}
\begin{document}
\begin{abstract}
The purpose of this short note is twofold: First to elucidate some connections between the ``building block'' of Dimofte--Gaiotto--Gukov's $3$D index, known as the tetrahedral index $\tet\left(m,e\right)$, and Hahn--Exton's $q$-analogue of the Bessel function $J_\nu \left(z;q\right)$. The correspondence between $\tet$ and $J_\nu$ will allow us to translate useful relations from one setting to the other.
Second, we want to introduce to the $q$-hypergeometric community some possibly new techniques, theory and conjectures arising from applications of physical mathematics to geometric topology.
\end{abstract}
\maketitle

\section{Introduction}

The study of quantum invariants of $3$-manifolds has seen remarkable developments over the past two decades, partly driven by new results and connections with geometric topology. Among the most interesting constructions to emerge from this interplay is the $3$D index, introduced by Dimofte, Gaiotto, and Gukov \cite{dimofte2014gauge,dimofte20133}, which assigns a formal power series $I_\T\left(q\right) \in \Z[\![q^{\frac12}]\!]$ to an ideal triangulation $\T$ of a cusped $3$-manifold $M$. This invariant, built from a basic building block known as the tetrahedral index $\tet\left(m,e\right)$, encodes deep geometric and topological information on $M$~\cite{garoufalidis20163d,garoufalidis20163d_angle, garoufalidis2022periods, garoufalidis20243d}.

On the other hand, the Hahn–Exton $q$-Bessel function $J_\nu\left(z;q\right)$, a classical object in the theory of basic hypergeometric series, has long been studied for its rich analytic structure and its connections to physics, quantum groups, combinatorics, to mention a few~\cite{SwarttouwPhD, koelink1995q, koelink1994quantum}.

The starting point of this note is to make explicit the correspondence between these two functions, by identifying the tetrahedral index with an evaluation of the Hahn–Exton function. While this connection is  known to experts familiar with both frameworks, it seems to have never appeared in the literature. Our aim is to demonstrate how known identities for $J_\nu\left(z;q\right)$ can be translated into non-trivial relations for the tetrahedral index, and vice versa.

Beyond these structural parallels, we also explore how this correspondence can be used to translate conjectural or experimental observations with a topological flavour into the language of $q$-hypergeometric series. In particular, we revisit a formula of Gang and Yonekura~\cite{gang2018symmetry} expressing the $3$D index of a Dehn filled manifold in terms of the index of the unfilled one~\cite{3dindexdehn}, and suggest that its effectiveness (even in cases where convergence is not guaranteed) may be better understood through number theoretic techniques.

While no fundamentally new theorems are proved here, we hope that this note will be of interest to two distinct communities: to topologists seeking tools for understanding quantum invariants, and to number theorists interested in new applications of classical $q$-series. We also include some conjectural identities and open questions.

\subsection{Acknowledgements} The author wishes to thank Craig Hodgson, Hyam Rubinstein and Ole Warnaar for helpful discussions and sharing their expertise. This research has been partially supported by grant DP190102363 from the Australian
Research Council.

\section{Tetrahedral index and the Hahn-Exton function}

We will be using the standard notation for $q$-hypergeometric functions (also known as basic hypergeometric functions), following Gasper and Rahman's book~\cite{GasperRahman}.\\

Recall the definition of the basic hypergeometric series:
\begin{equation}
_{r}\phi_s \left[\begin{matrix}
a_1 & a_2 & \ldots & a_{r} \\
b_1 & b_2 & \ldots & b_{s} \end{matrix}
; q,z \right] = \sum_{n=0}^\infty
\frac {\left(a_1, a_2, \ldots, a_{r};q\right)_n} {\left(b_1, b_2, \ldots, b_s,q;q\right)_n} \left( \left(-1\right)^n q^{\binom{n}{2} }  \right)^{1+s-r}z^n,
\end{equation}
where $\left(c_1, c_2, \ldots, c_{r};q\right)_n$ is a shorthand for $\prod_{i = 1}^{r} \left(c_i;q\right)_n$, and $\left(c;q\right)_n = \prod_{i=0}^{n-1} \left(1-c  q^i\right)$ is known as a $q$-Pochhammer symbol. Here $n \in \N \cup \infty$.

\subsection{The tetrahedral index}\label{ssec:tet_index}

The ``building block'' of the $3$D index, first introduced in~\cite{dimofte20133}, is the $q$-holonomic function $I_\Delta \colon \Z^2 \longrightarrow \Z(\!(q^{\frac12})\!)$ in two variables $m,e \in \Z$, given explicitly by
\begin{equation}\label{eqn:tetrahedral_index}
I_\Delta \left(m,e\right) = \sum_{n = e_+}^\infty \left(-1\right)^n \frac{q^{\frac{n\left(n+1\right)}{2} - \left(n +\frac{e}{2} \right)m}}{\left(q\right)_n \left(q\right)_{n+e}},
\end{equation}
where $e_+ = \frac{|e| - e}{2}$. For notational convenience, the dependency on $q$ of $\tet$ and in the $q$-Pochhammer symbols has been suppressed.

By~\cite[Sec.~3.1]{dimofte20133}, tetrahedral indices can be identified as the coefficients in the generating function
\begin{equation}\label{eqn:index_charge_basis}
\frac{\left(q^{-\frac{m}{2} +1} \zeta^{-1}\right)_\infty}{\left(-q^{-\frac12} \zeta\right)_\infty} = \sum_{e \in \Z} I_\Delta \left(m,e\right) \zeta^e.
\end{equation}

The left hand side of equation~\eqref{eqn:index_charge_basis} is the product $e_q\left(-q^{-\frac12} \zeta\right) E_q\left(q^{-\frac{m}{2} +1} \zeta^{-1}\right)$ of two well-known $q$-analogues of the exponential function:
\begin{equation}\label{eqn:exp1}
e_q\left(z\right) = \sum_{n \ge 0} \frac{z^n}{\left(q\right)_n} = {_1 \phi_0} \left[ \begin{matrix}
0\\ -
\end{matrix};q,z\right] = \frac{1}{\left(z\right)_\infty},
\end{equation}
\begin{equation}\label{eqn:exp2}
E_q\left(z\right) = \sum_{n \ge 0} \frac{q^{\binom{n}{2}}}{\left(q\right)_n}z^n = {_0\phi_0}\left[ \begin{matrix}
-\\ -
\end{matrix};q,-z\right]= \left(-z\right)_\infty
\end{equation}

The latter two identities follow easily from~\cite[Sec.~1.3]{GasperRahman}.\\~\\
The tetrahedral index is known to satisfy numerous functional equations; see \emph{e.g.}~\cite{garoufalidis20163d_angle}, where $q$-holonomic relations completely determining $\tet(m,e)$ are explored. The first one is a simple chain of symmetries:
\begin{equation}\label{eqn:tetrahedral_index_symmetries}
\tet \left(m,e\right)  = \tet \left(-e,-m\right) =  \left(-\sqrt{q}\right)^{-e} \tet\left(e,-e-m\right) = \left(-\sqrt{q}\right)^{m} \tet\left(-e-m,m\right).
\end{equation}

Next, there are two series of three-term relations, which combine the values of $\tet$ on three adjacent lattice points in $\Z^2$:
\begin{gather}\label{eqn:index_3term_consecutive}
\begin{gathered}
I_\Delta\left(m,e+1\right)  +\left( q^{e+\frac{m}{2}}  - q^{-\frac{m}{2}} -q^{\frac{m}{2}} \right)I_\Delta\left(m,e\right)  + I_\Delta\left(m,e-1\right) = 0\\
I_\Delta\left(m+1,e\right)  +\left( q^{-m-\frac{e}{2}}  - q^{-\frac{e}{2}} -q^{\frac{e}{2}} \right)I_\Delta\left(m,e\right)  + I_\Delta\left(m-1,e\right) = 0,
\end{gathered}
\end{gather}

\begin{gather}\label{eqn:index_3term_adjacent}
\begin{gathered}
q^\frac{e}{2} I_\Delta\left(m+1,e\right)  +q^{-\frac{m}{2}} I_\Delta\left(m,e+1\right)  - I_\Delta\left(m,e\right) = 0\\
q^\frac{e}{2} I_\Delta\left(m-1,e\right)  +q^{-\frac{m}{2}} I_\Delta\left(m,e-1\right)  - I_\Delta\left(m,e\right) = 0.
\end{gathered}
\end{gather}

There are two further important relations. The first can be regarded as being some form of ``weighted orthogonality'' between the infinite arrays $\left(\tet(m,e)\right)_{e \in \Z}$, and is usually referred to as \emph{quadratic identity}:
\begin{equation}\label{eqn:quadratic_identity}
\sum_{e \in \Z} q^e \tet \left(m,e\right) \tet \left(m,e+c\right) = \delta_{c,0}.
\end{equation}
The last relation we need is referred to as \emph{pentagon relation}, and holds for variables $m_1,m_2,x_1,x_2,x_3 \in \Z$:
\begin{gather}\label{eqn:pentagon_identity}
\begin{gathered}
\sum_{e \in \Z} q^e \tet \left(m_1 ,e +x_1\right) \tet \left(m_2 ,e+x_2\right) \tet \left(m_1 +m_2, e + x_3\right) =\\ q^{-x_3} \tet \left(m_1-x_2+x_3,x_1-x_3\right) \tet \left(m_2-x_1+x_3, x_2-x_3\right).
\end{gathered}
\end{gather}

The quadratic and pentagon relations are crucial in the partial proof of the invariance of the $3$D index (see Section~\ref{sec:3Dindex} for a definition) under $2$-$3$ and $0$-$2$ moves on triangulations of hyperbolic $3$-manifolds~\cite{garoufalidis20163d, garoufalidis20151}. It is however important to remark that only moves of these kinds that preserve a further geometric property, known as $1$-efficiency, do indeed preserve the $3$D index.

Note that topological invariance (\emph{i.e.}~independence from the triangulation) of the $3$D index is still unknown in general; it was however proved in~\cite{garoufalidis2019meromorphic}  that the $3$D index takes the same value on two triangulations of the same cusped manifold admitting a strict angle structure.

\subsection{The Hahn-Exton function}\label{ssec:hahnexton}

Bessel functions are well-known functions with multiple applications to physics, number theory and probability, to mention a few (see \textit{e.g.}~\cite{koelink1995q}, \cite{koelink1994quantum}, \cite{swarttouw1992addition}). Their $q$-analogues first appeared in a series of papers by F.H.~Jackson in 1903-1905.  We refer to Swarttouw's PhD thesis~\cite{SwarttouwPhD} for a recap on the history of these $q$-analogues. The same thesis contains many of the relations and tools we are going to use in what follows.
One of the most studied $q$-analogues of the Bessel function is known as the Hahn-Exton  $q$-Bessel function. This is defined for $\nu, z \in \C$ (except for $z =0$ whenever $Re\left(\nu\right)<0$ or if $Re\left(\nu\right) = 0$ and $\nu \neq 0$) as
\begin{equation}\label{eqn:def_J}
J_\nu \left(z;q\right) = z^\nu \frac{\left(q^{\nu +1}\right)_\infty}{\left(q\right)_\infty} \sum_{n\ge 0} \frac{\left(-1\right)^n q^{\binom{n+1}{2}}}{\left(q^{\nu+1}\right)_n \left(q\right)_n} z^{2k} = z^\nu \frac{\left(q^{\nu +1}\right)_\infty}{\left(q\right)_\infty} {_1 \phi_1}\left[\begin{matrix} 0  \\ q^{\nu+1}  \end{matrix} ; q,z^2 q \right],
\end{equation}
whenever $\nu >-1$. It is easy to prove (\emph{cf}.~Cor.~2~\cite{SwarttouwPhD}) that $J_\nu\left(z;q\right)$ is an analytic function in $z$ (except for $z=0$ whenever $\nu \in \Z$), and is an analytic function of $\nu$.\\

Note that, by \cite[Thm.~3.2]{SwarttouwPhD}, if $\nu \in \Z_{\le 0}$, it is possible to write:
$$J_{\nu}\left(z;q\right) = \left(-\sqrt{q}\right)^{-\nu} J_{-\nu}\left(q^{-\frac{\nu}{2}} z;q\right).$$

The main point of this section is to build on the following easy observation identifying the tetrahedral index with an evaluation of Hahn-Exton $q$-Bessel function, and deduce a whole suite of relations for $\tet$. This connection is well-known to experts in the field, but to the best of the author's knowledge, it never explicitly appeared in print.

\begin{prop}\label{prop:equivalence}
Let $m,e \in \Z$; then
$$\tet \left(m,e\right) = J_e \left(q^{-\frac{m}{2}};q\right).$$
\end{prop}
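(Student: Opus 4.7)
The plan is to match the two sides of the claimed identity by direct comparison of series when $e \geq 0$, and then to extend to $e < 0$ by combining Swarttouw's extension of $J_\nu$ to nonpositive integer orders (quoted just before the proposition) with the symmetries~\eqref{eqn:tetrahedral_index_symmetries} of $\tet$.

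First I would handle the case $e \geq 0$. Here $e_+ = 0$, so both the tetrahedral sum and the Hahn--Exton series run over $n \geq 0$. Setting $\nu = e$ and $z = q^{-\frac{m}{2}}$ in~\eqref{eqn:def_J}, the prefactor $z^\nu = q^{-\frac{me}{2}}$ accounts for the $-\tfrac{em}{2}$ term in the exponent in the definition of $\tet$, while $z^{2n} = q^{-mn}$ accounts for the $-mn$ contribution, and $q^{\binom{n+1}{2}} = q^{\frac{n(n+1)}{2}}$ matches directly. The only nontrivial step is the $q$-Pochhammer simplification
\begin{equation*}
\frac{(q^{e+1})_\infty}{(q)_\infty\,(q^{e+1})_n} = \frac{1}{(q)_{n+e}},
\end{equation*}
which follows from the standard identities $(q)_\infty = (q)_e\,(q^{e+1})_\infty$ and $(q)_{n+e} = (q)_e\,(q^{e+1})_n$, both valid for $e \geq 0$. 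After this substitution the two series agree term by term.

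Next I would deduce the case $e < 0$ from the positive case. Applying the Swarttouw transformation with $\nu = e$ and $z = q^{-\frac{m}{2}}$ yields
\begin{equation*}
J_e\bigl(q^{-\frac{m}{2}};q\bigr) = (-\sqrt{q})^{-e}\, J_{-e}\bigl(q^{-\frac{m+e}{2}};q\bigr).
\end{equation*}
Since $-e > 0$, the first step gives $J_{-e}\bigl(q^{-\frac{m+e}{2}};q\bigr) = \tet(m+e,-e)$. I would then apply the symmetries from~\eqref{eqn:tetrahedral_index_symmetries}: the first symmetry $\tet(a,b) = \tet(-b,-a)$ turns $\tet(m+e,-e)$ into $\tet(e,-m-e)$, and the second symmetry then reads $\tet(m,e) = (-\sqrt{q})^{-e}\,\tet(e,-e-m)$, matching the right hand side exactly.

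Neither step presents a serious difficulty; the main task is just careful bookkeeping of $q$-Pochhammer shifts in the $e \geq 0$ case, and noticing that Swarttouw's extension of $J_\nu$ to negative integer $\nu$ is the precise shadow, on the Hahn--Exton side, of the tetrahedral symmetry interchanging $e$ with $-e-m$. This reciprocity is really the conceptual content of the proposition: the compatibility of the two ``negative index'' conventions is what allows a single clean statement for all $m,e \in \Z$.
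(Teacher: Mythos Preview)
Your proof is correct. For $e \geq 0$ it coincides with the paper's argument: both perform the same $q$-Pochhammer rearrangement $(q)_{n+e} = (q)_e (q^{e+1})_n$ and $(q^{e+1})_\infty/(q)_\infty = 1/(q)_e$ to match the ${}_1\phi_1$ form of $J_e$ term by term.

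For $e<0$ the routes differ. The paper repeats the direct computation: it shifts the summation index (since $e_+=-e$) and reorganises the resulting sum into the ${}_1\phi_1$ in~\eqref{eqn:index_qhyper_negative}, then identifies this with $J_e$. You instead reduce to the already-proved positive case by combining Swarttouw's relation $J_e(z;q)=(-\sqrt{q})^{-e}J_{-e}(q^{-e/2}z;q)$ with the tetrahedral symmetry $\tet(m,e)=(-\sqrt{q})^{-e}\tet(e,-e-m)$ from~\eqref{eqn:tetrahedral_index_symmetries}. Your route is slightly more conceptual---it makes explicit that Swarttouw's negative-order convention and the $\tet$ symmetry are the \emph{same} identity under the dictionary of the proposition---and it avoids a second bookkeeping exercise. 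The paper's direct calculation, on the other hand, yields the explicit ${}_1\phi_1$ expression~\eqref{eqn:index_qhyper_negative} for $\tet(m,e)$ with $e<0$ as a by-product, which may be useful in its own right. Both approaches are short and neither presents any difficulty.
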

\begin{proof}
It is immediate to see that, if $e \ge 0$,
$$I_\Delta \left(m,e\right) = q^{-\frac{em}{2}}\sum_{n = e_+}^\infty \left(-1\right)^n \frac{q^{\binom{n}{2}} q^{n\left(1-m\right)}}{\left(q\right)_n \left(q\right)_{n+e}}. $$

Moreover, note that $\left(q\right)_{n+e} = \left(q\right)_n \left(q^{n+1}\right)_e = \left(q\right)_{e} \left(q^{e+1}\right)_n$.

Putting everything together, we get
$$I_\Delta \left(m,e\right) = \frac{q^{-\frac{em}{2}}}{\left(q\right)_e}\sum_{n = e_+}^\infty \left(-1\right)^n \frac{q^{\binom{n}{2}} q^{n\left(1-m\right)}}{\left(q\right)_n \left(q^{e+1}\right)_{n}}.$$
From this latter expression it is easy to realise that

\begin{equation}\label{eqn:index_qhyper_positive}
I_\Delta \left(m,e\right) = \frac{q^{-\frac{me}{2}}}{\left(q\right)_e} {_1 \phi_1}\left[\begin{matrix} 0  \\ q^{e+1}  \end{matrix} ; q,q^{1-m} \right].
\end{equation}

If instead $e<0$ a similar analysis yields
\begin{equation}\label{eqn:index_qhyper_negative}
I_\Delta \left(m,e\right) = \frac{q^{\frac{me}{2}}}{\left(q^e\right)_{-e}} {_1 \phi_1}\left[\begin{matrix} 0  \\ q^{-e+1}  \end{matrix} ; q,q^{1-e-m} \right],
\end{equation}
and we can conclude.
\end{proof}

In view of Proposition~\ref{prop:equivalence}, the relations satisfied by $\tet$ can be understood as consequences of more general relations known for $J_\nu\left(z;q\right)$; the $3$-term relations \eqref{eqn:index_3term_consecutive}, \eqref{eqn:index_3term_adjacent} are none other than the difference equations satisfied by $J_\nu$ (see \emph{e.g.}~\cite[4.3.1]{SwarttouwPhD}).

The quadratic identity in equation~\eqref{eqn:quadratic_identity} is implied by the orthogonality relations for $J_\nu$~\cite[Thm.~3.8]{SwarttouwPhD}; the pentagon identity can be derived straightforwardly by specialising a $q$-analogue of Graf's addition formula~\cite{koelink1995q}. The generating function for $\tet$ is paralleled in~\cite[Prop.~2.5]{koornwinder1992}.

\subsection{From $J_\nu$ to $\tet$}

The purpose of this section is to use Proposition~\ref{prop:equivalence} to translate certain known relations for $J_\nu$ into ones for $\tet$. \\

We start with \cite[Prop.~3.2]{groenevelt20183}, which (after a change of variable) gives
for $|t|<1$ and $\nu > -1$
\begin{equation}
\sum_{n = 0}^\infty q^{-\frac{\nu n}{2}} J_\nu \left(z q^\frac{n}2;q\right) \frac{t^n}{\left(q\right)_n} = z^{\nu} \frac{\left(q^{\nu+1}\right)_\infty}{\left(q\right)_\infty \left(t\right)_\infty} {_1 \phi_1}\left[\begin{matrix} t  \\ q^{\nu+1}  \end{matrix} ; q, q z^2 \right],
\end{equation}
which implies right away that for $|t|< 1$ and $e > -1$
\begin{equation}
\sum_{n = 0}^\infty I_\Delta \left(m-n, e\right)\frac{\left(t q^{-\frac{e}{2}}\right)^n}{\left(q\right)_n} = q^{-\frac{me}{2}} \frac{\left(q^{e+1}\right)_\infty}{\left(q\right)_\infty \left(t\right)_\infty} {_1 \phi_1}\left[\begin{matrix} t  \\ q^{e+1}  \end{matrix} ; q, q^{1-m}  \right].
\end{equation}

As a further consequence, by specialising to $t = q^{e+1}$ and using equation~\eqref{eqn:exp2}, we obtain the expression (again valid for $e \ge 0$)
\begin{equation*}
\sum_{n=0}^\infty \frac{\left( q^{\frac{e}2 +1}\right)^n}{\left(q\right)_n} \tet (m-n,e) = q^{-\frac{me}{2}} \frac{\left( q^{1-m}\right)_\infty}{\left( q\right)_\infty} = \begin{cases}
0 & \text{ if }m>0\\ \vspace{0.2cm}
\frac{q^{-\frac{me}{2}}}{\left( q\right)_{\infty}} & \text{ if }m=0\\  \vspace{0.2cm}
\frac{q^{-\frac{me}{2}}}{\left( q\right)_{-m}} & \text{ if }m<0.
\end{cases}
\end{equation*}

One interesting relation on tetrahedral indices can be deduced from \cite[Eqn~3.1]{swarttouw1992addition}; explicitly, for $e_1,e_2>-1$:
\begin{equation}
\begin{gathered}
\tet\left(m_1,e_1\right) \tet\left(m_2,e_2\right) = \\ q^{-\frac{m_1 e_1 + m_2 e_2}{2}}\frac{ \left(q^{e_1+1}\right)_\infty \left(q^{e_2+1}\right)_\infty}{\left(q\right)^2_\infty}  \sum_{n \ge 0} \frac{\left(-1\right)^n q^{-m_2 n} q^{\binom{n+1}{2}}}{ \left(q^{e_2+1}\right)_n \left(q\right)_n} {_2 \phi_1}\left[\begin{matrix} q^{-n}, q^{-n-e_2}  \\ q^{e_1 +1}  \end{matrix} ; q, q^{n-m_1 +m_2 +e_2 +1} \right].
\end{gathered}
\end{equation}

Yet another one comes from~\cite{koelink1994quantum}, and reads:
\begin{equation}
x^\nu J_\nu \left( \frac{z q^{\frac{\nu}{2}}}{x};q\right) = \sum_{n=0}^\infty \left(- \frac{z \sqrt{q}}{x^2} \right)^n \frac{\left(x^2\right)_n}{\left(q\right)_n} J_{\nu+n} \left(z q^{\frac{\nu+n}{2}};q\right)
\end{equation}
which, after applying equation~\eqref{eqn:tetrahedral_index_symmetries}, restricts to
\begin{equation}
I_\Delta \left(k,0\right) = \sum_{n = 0}^\infty q^{-\frac{n+k}{2}} I_\Delta \left(k+n,1\right) ,
\end{equation}
for $x = z = \sqrt{q}$. If instead we impose $z = q^{-k +\frac12} $, $x = \sqrt{q}$ we get
\begin{equation}\label{eqn:diagonal_term}
q^{\frac{k}{2}} I_\Delta \left(k,k\right) = \sum_{n = 0}^\infty \left(-q^{-k}\right)^n I_\Delta \left(k-n-1,k+n\right).
\end{equation}
~\\
A similar relation comes from the $q$-analogue of the multiplicative theorem for Bessel functions, proved in \cite[Thm.~3.11]{SwarttouwPhD}, and holds for $m<1$:
\begin{equation}
\tet\left(m - 2\lambda,e\right) = q^{\lambda e} \sum_{k \ge 0} q^{\left(1- \frac{m}{2}\right)k} \frac{\left(q^{2\lambda}\right)_k}{\left(q\right)_k} \tet\left(m,e+k\right)
\end{equation}

Finally for this section, we can translate~\cite[Cor.~2]{rubin2002toeplitz}
to get the following relation for $m_1,m_2,e \in \Z$ such that $m_1 +m_2\neq 0$:
\begin{equation}\label{eqn:1to3}
q^{\frac12 \left(m_1 - m_2\right)e} \tet\left(m_1+m_2,e\right) = q^{-e} \sum_{k,l \in \Z} q^{\frac{\left(2+ m_1 \right)l + \left(2- m_2\right) k}{2}} \tet \left(m_1,l\right) \tet \left(m_2,k\right) \tet \left(0,k+l-e\right).
\end{equation}

\begin{ques}
Given that the pentagon and quadratic identities~\eqref{eqn:quadratic_identity},\eqref{eqn:pentagon_identity} admit a straightforward interpretation in terms of Pachner moves, is there a geometric interpretation of any of the equations from this section?
\end{ques}

\subsection{From $\tet$ to $J_\nu$ }

The main result of~\cite{3dindexdehn} relied crucially on an identification between certain $q$-hypergeometric functions and a generating function for ``diagonal'' tetrahedral indices. More precisely, define
\begin{equation}\label{eqn:phi_expression}
\varphi_r\left(z,q\right) =
\begin{cases}
\left(-z q^{-\frac12}\right)^{\frac{r}{2}} {_3\phi_3} \left[\begin{matrix} -z^{-1} \sqrt{q} &  -z\sqrt{q}  & 0\\
-q & \sqrt{q} & -\sqrt{q}
 \end{matrix}
; q, q^{1-\frac{r}{2}} \right]   & r \text{ even}\\
~\\
\left(-z\right)^{\frac{r-1}{2}} q^{\frac{1-r}{2}}
\frac{\left(1+z\right)}{1-q} {_3\phi_3} \left[\begin{matrix} -z^{-1} q   & -z q  & 0\\
-q & q^\frac{3}{2} & -q^{\frac{3}{2}}
 \end{matrix}
; q, q^{\frac{3-r}{2}} \right]   & r \text{ odd.}
\end{cases}
\end{equation}
Then, by~\cite[Sec.~10]{3dindexdehn} we have
\begin{equation}\label{eqn:seriesindexshifted}
\varphi_r\left(z,q\right) = \sum_{e \in \Z}  I_\Delta \left(e-r,e\right) z^e = \sum_{e \in \Z} J_e \left(q^{\frac{r-e}{2}};q\right) z^e.
\end{equation}

It turns out that $\varphi_r\left(z,q\right)$ can be identified with a product of certain $q$-analogues of trigonometric functions, introduced by various authors (see the discussion in \cite[Sec.~2]{Nevanlinna}).

\begin{equation}\label{eqn:def_q_trigonometric}
\qsin \left(z\right) = \sum_{n\ge 0} \frac{\left(-1\right)^n q^{n\left(n+1\right)}}{\left(q\right)_{2n+1}} z^{2n+1}, \,\,\,\,\,\,\,\,\, \qcos \left(z\right) = \sum_{n\ge 0} \frac{\left(-1\right)^n q^{n^2}}{\left(q\right)_{2n}} z^{2n}.
\end{equation}

Equivalently,
\begin{equation}\label{eqn:trigonometric_as_hypergeometric}
\qsin\left(z\right) = \frac{z}{1-q} {_1\phi_1} \left[ \begin{matrix} 0\\ q^3 \end{matrix}; q^2,q^2 z^2 \right],\,\,\,\,\,\,\,\,\,\, \qcos\left(z\right) =  {_1\phi_1} \left[ \begin{matrix} 0\\ q \end{matrix}; q^2,q z^2\right].
\end{equation}

Therefore in particular
$$\qsin \left(z\right) = \frac{\left(q^2;q^2\right)_\infty }{\left(q;q^2\right)_\infty} z^{\frac12} J_{\frac12} \left(z;q^2\right),\,\,\,\,\,\,\,\,\, \qcos \left(z\right) = \frac{\left(q^2;q^2\right)_\infty }{\left(q;q^2\right)_\infty} z^{\frac12} J_{-\frac12} \left(q^{-\frac12}z;q^2\right).$$

\begin{rmk}
It is also possible to express these $q$-sine and $q$-cosines as basic hypergeometric functions with base $q$:
$$\qsin \left(z\right) = \frac{z}{1-q} {_2\phi_3} \left[ \begin{matrix} 0 &0\\ -q & q^{\frac32} & -q^{\frac32} \end{matrix}; q, -\left(z q\right)^2\right]$$
$$\qcos \left(z\right) = {_2\phi_3} \left[ \begin{matrix} 0 &0\\-q  & q^{\frac12} & -q^{\frac12} \end{matrix}; q,-\left(q^{\frac12} z\right)^2\right]$$
\end{rmk}

As a straightforward consequence of \cite[Prop.~1]{Nevanlinna}, we proved (\cite[Sec.~10]{3dindexdehn}) that, if $r$ is even, we get
\begin{equation}\label{eqn:phi_trigonometric_even}
\varphi_r \left(z,q\right) = \left(-z q^{-\frac12}\right)^{\frac{r}{2}} \left[\qcos \left(z^{\frac12} q^{\frac{1-r}{4}}\right) \qcos\left(- z^{-\frac12} q^{\frac{1-r}{4}}\right) + q^{\frac12} \qsin\left(z^{\frac12} q^{\frac{1-r}{4}}\right)\qsin\left(- z^{-\frac12} q^{\frac{1-r}{4}}\right) \right],
\end{equation}
while for $r$ odd
\begin{equation}\label{eqn:phi_trigonometric_odd}
\varphi_r \left(z,q\right) = \left(-1\right)^{\frac{r-1}{2}} q^{\frac{1-r}{4}} z^{\frac{r}{2}}\left[ \qsin\left(z^{\frac12} q^{\frac{1-r}{4}} \right) \qcos\left(-z^{-\frac12} q^{\frac{1-r}{4}}\right) - \qcos\left(z^{\frac12} q^{\frac{1-r}{4}}\right) \qsin\left(-z^{-\frac12} q^{\frac{1-r}{4}} \right)\right]
\end{equation}

The previous identifications yield (among others) the following (possibly new) identities (\textit{cf.}~\cite[Cor.~10.3]{3dindexdehn}); if $r \in \Z$ is even, then:
\begin{align}
\sum_{\nu \in \Z} \left(-\sqrt{q}\right)^\nu  J_{\nu}\left(q^{\frac{r-\nu}{2}};q\right) & =1 ,\\
\sum_{\nu \in \Z} \left(-\sqrt{q}\right)^{-\nu}  J_{\nu}\left(q^{\frac{r-\nu}{2}};q\right)& = q^{-\frac{r}{2}} .
\end{align}
If instead $r \in \Z$ is odd:
\begin{align}
\sum_{\nu \in \Z} \left(-q\right)^\nu J_{\nu}\left(q^{\frac{r-\nu}{2}};q\right)  & =1 ,\\
\sum_{\nu \in \Z} \left(-q\right)^{-\nu} J_{\nu}\left(q^{\frac{r-\nu}{2}};q\right) & = -q^{-r} .
\end{align}

\section{$3$D index and Dehn fillings}\label{sec:3Dindex}

To simplify and streamline what follows, we are going to restrict ourselves to the special case of the complement of a link in $S^3$ with $r \ge 1$ components.
We refer to~\cite{thurston2014three, benedetti1992lectures, purcell2020hyperbolic} for basic notions in hyperbolic geometry and knot theory.

Such a manifold $M$ can be constructed and concisely described by an \emph{ideal triangulation} $\mathcal{T}$. This is a triangulation of $M$ built by ideal tetrahedra, \textit{i.e.}~compact tetrahedra with the vertices removed. A given cusped 3-manifold admits infinitely many different ideal triangulations, all related by a finite set of combinatorial moves known as Pachner moves (see \textit{e.g.}~\cite{matveev2007algorithmic,rubinstein2020traversing}).

\begin{figure}[h]
\centering
\includegraphics[width=11cm]{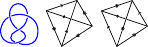}
\caption{The complement in $S^3$ of the figure-eight knot $4_1$ shown on the left, can be obtained by gluing two ideal tetrahedra, using the identifications displayed on the edges (\textit{cf.}~\cite{Thu78notes}). }
\label{fig:4_1}
\end{figure}

Thurston~\cite{Thu78notes} introduced an important method for constructing hyperbolic geometry on a cusped 3-manifold $M$ by gluing together geometric ideal tetrahedra in hyperbolic $3$-space $\mathbb{H}^3$. (These are  given as the convex hull of four generic points in $\partial_\infty \mathbb{H}^3$.)
Up to isometries of $\mathbb{H}^3$ such an ideal tetrahedron is completely determined by its \emph{shape parameter}. This is a complex number $z$ such that $\mathrm{Im}\left(z\right) >0$. The geometry on $M$ is then obtained by gluing together $n$ ideal tetrahedra along their faces via isometries. Thurston's gluing equations are a set of equations in the shape parameters of the triangulation's tetrahedra that guarantee that tetrahedra fit together correctly around the edges to give geometry on $M$ locally modelled on $\mathbb{H}^3$.
Importantly, the (logarithm of) these equations, expressed in matrix form, can be easily recovered by using the freely available program SnapPy~\cite{snappy}, via the \texttt{gluing\_equations()} command. \\

There are several approaches to compute the $3$D index; either in terms of \textit{edge weights} (see \textit{e.g.}~\cite{garoufalidis20151}) or using  \textit{normal surfaces}~\cite{garoufalidis20163d}. For a detailed explanation of the subtleties involved in the computations we refer to these papers as well as Section~14 and Appendix C in~\cite{3dindexdehn}.

We now give a brief description of the former approach.
We start with the gluing matrix $G_{\T}$ for the ideal triangulation $\T$, with $n$ tetrahedra $\Delta_1, \ldots,\Delta_n$,
and a set of $n$ edges denoted by $\E$. Roughly speaking, $I_\T^\omega\left(q\right)$ is an infinite sum of products of tetrahedral indices over integer weights assigned to the edges of $\T$, defined using the combinatorics of $\T$.

Consider weight functions $k : \E \to \Z$ assigning an integer weight $k\left(e\right)$ to each edge class $e\in \E$; this gives a weight on each edge of each tetrahedron $\Delta_j$, $j=1, \ldots, n$.  Adding up these weights on each pair of opposite edges in $\Delta_j$ yields a vector of {\em quad weights} $q_j\left(k\right) =\left(a_j\left(k\right),b_j\left(k\right),c_j\left(k\right)\right) \in \Z^3$.
Then define
\begin{equation}
I_\T^{0}\left(q\right)=  \sum_{\substack{k : \E \to \Z, \\ k|_{\E'}=0}}  q^{ k\left(\E\right) } \, \prod_{j=1}^n \left(-q^\frac12\right)^{-b_j\left(k\right)}\tet \left(b_j\left(k\right)-c_j\left(k\right), a_j\left(k\right)-b_j\left(k\right)\right)
\end{equation}
where $k\left(\E\right)=\sum_{e\in\E }  k\left(e\right)$ and $\E'$ is a suitable choice of $r$ edges. See~\cite[Sec.~4.6]{garoufalidis20151} to understand which choices are admissible (for $r=1$ any edge is suitable).

In general,
\begin{equation}
I_\T^\omega\left(q\right) =  \sum_{\substack{k : \E \to \Z, \\ k|_{\E'}=0}}  q^{ k\left(\E\right) } \, \prod_{j=1}^n \left(-q^\frac12\right)^{-b_j^\omega\left(k\right)}\tet \left(b_j^\omega\left(k\right) -c_j^\omega\left(k\right), a_j^\omega\left(k\right)-b_j^\omega\left(k\right)\right)
\end{equation}
where $\left(a_j^\omega\left(k\right),b_j^\omega\left(k\right),c_j^\omega\left(k\right)\right)  \in \Z^3$ depend on $k$ and some \textit{boundary conditions} $\omega \in \mathrm{H}_1\left(\bd M;\Z\right)$.
All the coefficients in these expressions can be read off easily from the gluing equations for $\T$ given by SnapPy.
With these conventions, $I_\T^\omega\left(q\right)$ is defined for
$$\omega \in \mathcal{K} = \ker\left( \mathrm{H}_1\left(\bd M ;\Z\right) \to \mathrm{H}_1\left(M; \Z_2\right)\right).$$

\begin{rmk}
It was proved in~\cite{garoufalidis20163d} that the index $I_\T^\omega\left(q\right)$ converges for all boundary conditions $\omega$ if and only if the ideal triangulation $\T$ is $1$-efficient. Here $1$-efficiency means that that $\T$ does not
contain any embedded closed normal surfaces of non-negative Euler characteristic, except
for vertex-linking tori.
\end{rmk}

\begin{example}
For Thurston's ideal triangulation $\T$ of the figure eight knot complement $S^3 \setminus 4_1$ from Figure~\ref{fig:4_1}
with two tetrahedra, and the standard meridian and longitude $\mu,\lambda \in H_1\left(\bd M;\Z\right)$, $\mathcal{K}$ is spanned by $\{2\mu,\lambda\}$. SnapPy produces the following gluing matrix for $\T$; we exclude the edge $e_2$ by giving it weight $0$.

 \begin{table}[h]
\begin{tabular}{|c|c||c|c|c|c|c|c|}
\hline
 edge/curve&weight& ~~$a_1$~~ & ~~$b_1$~~ & ~~$c_1$~~ & ~~$a_2$~~ & ~~$b_2$~~ & ~~$c_2$ \\
\hline
\hline
$e_1$ &$k$ &2 & 1 & 0 & 2 &1 & 0 \\
\hline
$e_2$ &0&0& 1 & 2& 0 & 1 & 2 \\
\hline
$2\mu$ & $x$&0& 0 & 1& -1 & 0 & 0 \\
\hline
$2 \lambda$ &$\frac{1}{2}y$ &0& 0 & 0& 2  & 0 & -2\\
\hline
\end{tabular}
\vspace{0.2cm}
\label{table:Fig8Gluing}
\caption{Gluing data for figure eight knot complement.}
\end{table}

Then,
\begin{equation}\label{eqn:index4_1}
I_{\T}^{2x \mu + y\lambda}\left(q\right)
 = \sum_{k \in \Z} \, \tet\left(k-x,k\right) \tet\left(k+y,k-x+y\right).
\end{equation}

For example,
\begin{gather*}
\qquad I_{\T}^0\left(q\right) =
1-2 q-3 q^2+2 q^3+8 q^4+18 q^5+18 q^6+14 q^7-12 q^8 + \ldots \\
I_{\T}^{2\mu}\left(q\right) =
-2 q-2 q^2+2 q^3+8 q^4+16 q^5+16 q^6+10 q^7-14 q^8+ \ldots \\
I_{\T}^\lambda\left(q\right) = -2 q^{3/2}+4 q^{7/2}+10 q^{9/2}
+14 q^{11/2}+10 q^{13/2}-2 q^{15/2}-32 q^{17/2} + \ldots \\
\end{gather*}
\end{example}

The main result from~\cite{3dindexdehn}  gives a rigorous proof of a formula by Gang and Yonekura~\cite{gang2018symmetry}, expressing the $3$D index of manifolds obtained as Dehn filling in terms of the $3$D index of the unfilled manifold.

Here we only consider Dehn fillings $M\left(\alpha\right)$ along slopes $\alpha$ on one
torus boundary component $T \subseteq \partial M$.
Then the Gang-Yonekura formula~\cite{gang2018symmetry} gives the $3$D-index for $M\left(\alpha\right)$ as an infinite linear combination of the $3$D-indices $I_M^\gamma$ for $M$ over boundary classes $\gamma$
having intersection number $0$ or $\pm 2$ with $\alpha$:
\begin{equation}\label{eqn:gang}
I_{M(\alpha)} = \frac{1}{2} \left( \,\sum_{\substack{\gamma \in \mathcal{K}_T\\ \gamma \cdot \alpha = 0}} \left(-1\right)^{|\gamma|}\left(q^{\frac{1}{2}|\gamma|} +q^{-\frac{1}{2}|\gamma|}\right) I_M^\gamma\left(q\right)  -\sum_{\substack{\gamma \in \mathcal{K}_T\\ \gamma \cdot \alpha = \pm 2}} I_M^\gamma (q) \right),
\end{equation}
where $|\gamma|$ is the number of components of $\gamma$ and $\mathcal{K}_T= \ker\left( \mathrm{H}_1\left(T ;\Z\right) \to \mathrm{H}_1\left(M; \Z_2\right)\right)$.
(For simplicity, we assume the boundary data vanishes on all components of $\partial M \setminus T$.)\\

The key takeaway come from the fact that every cusped $3$-manifold can be described as Dehn surgery on infinitely many links in $S^3$ (see \textit{e.g.}~Figure~\ref{fig:equivalent_knots}), and each such description is expected to produce the same $3$D index (\textit{cf.}~the main theorem in \cite{3dindexdehn}), thus giving non-trivial relations between the corresponding $q$-series.

\begin{figure}
\centering
\includegraphics[width=0.7\linewidth]{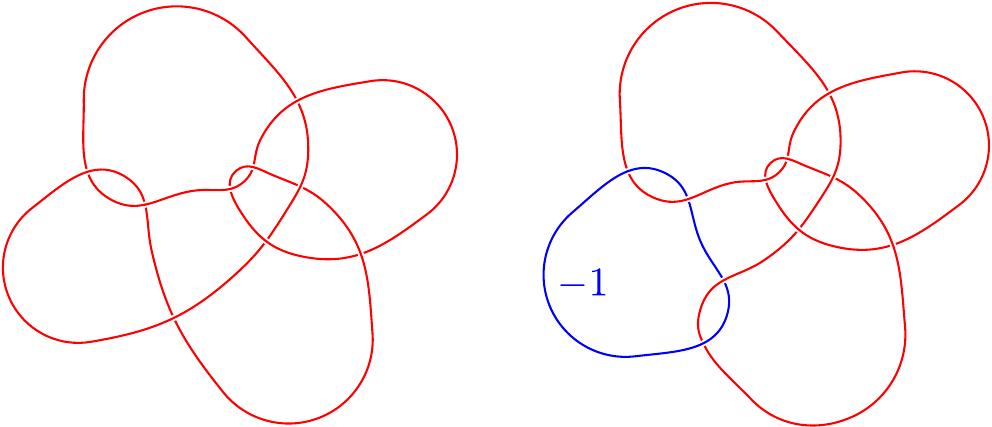}
\caption{The complement of the knot $8_{16}$ (shown on the left) can be alternatively obtained \textit{e.g.}~by performing $(-1)$-framed Dehn surgery on the blue component of the link shown on the right. (Figure created using KLO~\cite{KLO})}
\label{fig:equivalent_knots}
\end{figure}

\section{Conjectural relations in the closed case}

There is currently no precise mathematical definition of $3$D index for closed $3$-manifolds. In spite of this, equation~\eqref{eqn:gang} can be applied to knot complements. Remarkably, experimental applications of the formula appear to yield meaningful results (\textit{cf.}~\cite[Sec.~13]{3dindexdehn}). As an example, in the case of hyperbolic knots, it appears that computing the index for exceptional slopes (\emph{i.e.}~producing non-hyperbolic manifolds, see \textit{e.g.}~\cite{dunfield2020census}) always gives a somewhat trivial or diverging index.
In certain special cases, such as surgeries on alternating torus knots (\emph{cf}.~\cite[Sec.~13.1]{3dindexdehn}), whose index is particularly simple to start with, it is possible to give an explicit answer for all surgeries.

\begin{example}
As shown in~\cite[Sec.~13.2]{3dindexdehn},
combining the expression from~\eqref{eqn:index4_1} and the definition from equation~\eqref{eqn:phi_expression}, it is straightforward to show that the $3$D index $I_\mathcal{T}^{2x\mu+ y\lambda} \left(q\right)$ of the complement of $4_1$ can be computed as the coefficient of $z^0$ in the expression:
$$z^y \varphi_x \left(z\right) \varphi_{-x} \left(z\right).$$

This knot is known to admit exactly $10$ exceptional surgeries, given by the slopes
$E\left(4_1\right) = \left\{\frac{1}{0}, \frac01, \pm\frac11, \pm\frac21, \pm \frac31, \pm\frac41\right\}$.
See Table~\ref{tab:n1_fig8} below for a list of the index's values for some integer
surgeries on $4_1$, in $q$-degrees $\le 10$.

\begin{ques}
The only entry in Table~\ref{tab:n1_fig8} which is known to contain an exact result (following~\cite[Thm.~13.2]{3dindexdehn}) is $(0,1)$-surgery on the $4_1$'s complement. Is it possible to prove the other identities (corresponding to the other exceptional surgeries for $4_1$) suggested by the table?
\end{ques}
\begin{table}
\begin{tabular}{|c|c|}
\hline
Slope & $3$D index\\
\hline
\multirow{2}{*}{$\left(1,0\right)$} & $ O\left(q^{11}\right)$ \\
 & \\
\hline
\multirow{2}{*}{$\left(0,1\right)$} & $1$ \\
 & \\
\hline
\multirow{2}{*}{$\left(1,1\right)$} & $1 + O\left(q^{11}\right)$ \\
 & \\
\hline
\multirow{2}{*}{$\left(2,1\right)$} & $1 + O\left(q^{11}\right)$ \\
 & \\
\hline
\multirow{2}{*}{$\left(3,1\right)$} & $1 + O\left(q^{11}\right)$ \\
 & \\
\hline
\multirow{2}{*}{$\left(4,1\right)$} & $\infty - 2q^2 - 2q^3 - 4q^4 - 4q^5  $ \\
 & $- 6q^6 - 4q^7 - 8q^8 - 6q^9 - 8q^{10} + O\left(q^{11}\right)$ \\
\hline
\multirow{2}{*}{$\left(5,1\right)$} & $1 - q - 2 q^2 - q^3 - q^4 + q^5 + 2 q^6 + 7 q^7$ \\
 & $+ 8 q^8 + 12 q^9 + 14 q^{10} + O\left(q^{11}\right)$ \\
\hline
\multirow{2}{*}{$\left(6,1\right)$} & $1 - t - q^{\frac32} - q^2 - q^\frac52 + q^\frac92 + 3 q^5 + 4 q^{\frac{11}{2}} + 4 q^6+ 6 q^{\frac{13}{2}}$ \\
 & $ + 8 q^7 + 7 q^\frac{15}2 + 7 q^8 + 9 q^\frac{17}2 + 8 q^9 + 6 q^\frac{19}2 + 4 q^{10} + O\left(q^{11}\right)$ \\
\hline
\multirow{2}{*}{$\left(7,1\right)$} & $1 - q^2 + q^4 + 3 q^5 + 3 q^6 + 6 q^7$ \\
 & $+ 4 q^8 + 2 q^9 - 4 q^{10} + O\left(q^{11}\right)$ \\
\hline
\multirow{2}{*}{$\left(8,1\right)$} & $1 - q^2 + 2 q^4 + 5 q^5 + 6 q^6 + 8 q^7$ \\
 & $+ 4 q^8 - 2 q^9 - 14 q^{10} + O\left(q^{11}\right)$ \\
\hline
\multirow{2}{*}{$\left(9,1\right)$} & $1 - q^2 + 2 q^5 + 2 q^6 + 4 q^7 + q^8$ \\
  & $- q^9 - 6 q^{10} + O\left(q^{11}\right)$ \\
\hline
\multirow{2}{*}{$\left(10,1\right)$} & $1 - q^2 + q^\frac52 + q^\frac72 + q^\frac92 + 2 q^5 + 2 q^\frac{11}2 + 2 q^6 + q^\frac{13}2 $ \\
  & $+ 4 q^7- q^\frac{15}2 + 2 q^8 - 5 q^\frac{17}2 - 9 q^\frac{19}2 - 4 q^{10} + O\left(q^{11}\right)$ \\
\hline
\end{tabular}
\caption{Table from~\cite{3dindexdehn}, reporting the computations for integer-sloped surgeries on the complement of the figure eight knot.}
\label{tab:n1_fig8}
\end{table}
\end{example}

The number of exceptional surgeries on hyperbolic knots in $S^3$ is known to be finite. In fact, the number of exceptions is at most $10$~\cite{lackenby2013maximal}, and $4_1$ is the only known
knot with the maximal possible number (see \textit{e.g.} the survey~\cite{gordon2012exceptional}). Given all this, we can ask the following question (see also~\cite[Sec.~15]{3dindexdehn}):
\begin{ques}
Given a triangulation of a cusped $3$-manifold, is it possible to detect all exceptional surgeries using only ``$q$-hypergeometric'' (\textit{i.e.}~not based on geometric or topological considerations) techniques?
\end{ques}

\end{document}